\newcommand{\vertiii}[1]{{\left\vert\kern-0.25ex\left\vert\kern-0.25ex\left\vert #1 
    \right\vert\kern-0.25ex\right\vert\kern-0.25ex\right\vert}}
\theoremstyle{plain}
\newtheorem{teorema}{Theorem}[section]
\newtheorem{proposizione}[teorema]{Proposition}
\newtheorem{lemma}[teorema]{Lemma}
\newtheorem*{theorem*}{Theorem}
\theoremstyle{definition}
\newtheorem{definizione}{Definition}[section]
\theoremstyle{remark}
\newtheorem{osservazione}{Remark}[section]
\newcommand{\res}
\newcounter{const}
\newcounter{eps}
\title{\normalfont\spacedallcaps{Intrinsically Lipschitz functions with normal target in Carnot groups}} 
\author{\spacedlowsmallcaps{Gioacchino Antonelli\textsuperscript{*} and Andrea Merlo\textsuperscript{**}}}
\date{}
\begin{document}

\renewcommand{\sectionmark}[1]{\markright{\spacedlowsmallcaps{#1}}} 
\lehead{\mbox{\llap{\small\thepage\kern1em\color{halfgray} \vline}\color{halfgray}\hspace{0.5em}\rightmark\hfil}} 
\pagestyle{scrheadings}
\maketitle 
\setcounter{tocdepth}{2}
\paragraph*{Abstract}
We provide a Rademacher theorem for intrinsically Lipschitz functions $\phi:U\subseteq \mathbb W\to \mathbb L$, where $U$ is a Borel set, $\mathbb W$ and $\mathbb L$ are complementary subgroups of a Carnot group, where we require that
$\mathbb L$ is a normal subgroup. Our hypotheses are satisfied for example when $\mathbb W$ is a horizontal subgroup. Moreover, we provide an area formula for this class of intrinsically Lipschitz functions.  
\paragraph*{Keywords} Carnot groups, intrinsically Lipschitz functions, Rademacher theorem, area formula.
\paragraph*{MSC (2010)} 53C17, 
	22E25, 
	28A75,  
	49Q15, 
	26A16.  


{\let\thefootnote\relax\footnotetext{* \textit{Scuola Normale Superiore, Piazza dei Cavalieri, 7, 56126 Pisa, Italy,}}}
{\let\thefootnote\relax\footnotetext{** \textit{Universit\'a di Pisa, Largo Bruno Pontecorvo, 5, 56126 Pisa, Italy.}}}

\section{Introduction}
In the last years there has been an increasing interest towards Geometric Measure Theory in the non-smooth setting, and in particular in the setting of sub-Riemannian Carnot groups for what concers the notion of rectifiability. This line of research was initiated by the result in \cite{AK00} in which the authors proved that the first Heisenberg group $\mathbb H^1$ is not $k$-rectifiable, according to Federer's definition (see \cite{Federer1996GeometricTheory}), for $k\geq 2$.

In this note we focus on the notions of intrinsically Lipschitz function and intrinsically differentiable function in the setting of Carnot groups. We refer the reader to
\cite{FranchiSerapioni16} for a wide study of the notion of intrinsically Lipschitz function and to \cite{ChousionisFasslerOrponen19} for some recent developments.
The notion of intrinsically differentiable function has been introduced in \cite{FSSC06} and then widely studied in the last years: we refer the reader to \cite{ASCV06, ArenaSerapioni, FSSC11, FMS14, ADDDLD20} for some developments. 

One of the main open questions in this area of research is whether a Rademacher type theorem holds. Namely, is it true that every intrinsically Lipschitz function between complementary subgroups of a Carnot group is intrinsically differentiable almost everywhere? Some first answers have been given in \cite{FMS14, FSSC11} in the setting of Carnot groups $\mathbb G$ of type $\star$, i.e., a class strictly larger than Carnot groups of step 2, and for maps $\phi:U\subseteq\mathbb W\to \mathbb L$, where $U$ is \textbf{open} and $\mathbb W$ and $\mathbb L$ are complementary subgroups of $\mathbb G$, with $\mathbb L$ \textbf{horizontal and one-dimensional}. 
Very recently, by making use of currents, the author of \cite{Vittone20} has proved the Rademacher theorem for intrinsically Lipschitz maps between complementary subgroups of any dimension in the Heisenberg groups $\mathbb H^n$.

In this note we prove that a Rademacher type theorem with \textbf{normal} target is true in arbitrary Carnot groups. 
We provide also an area formula for maps that satisfy our hypotheses. 
\begin{teorema}\label{thm:AbelianRademacherIntro}
Let $\mathbb W$ and $\mathbb L$ be two complementary subgroups of a Carnot group $\mathbb G$. 
Let us assume that $\mathbb L$ is \textbf{normal} and let $U\subseteq \mathbb W$ be a \textbf{Borel} set. Let $\phi: U\subseteq \mathbb W\to\mathbb L$ be an intrinsically Lipschitz function. Let $\Phi:U\to \mathbb G$ be the graph map of $\phi$, i.e., $\Phi(w):=w\cdot\phi(w)$ for every $w\in U$.

Then $\mathbb W$ is a \textbf{Carnot subgroup} of $\mathbb G$; $\phi$ is intrinsically differentiable $\mathcal{H}^k\llcorner\mathbb W$-almost everywhere on $U$, where $k:=\dim_{H}(\mathbb W)$; the map $\Phi$ is Pansu differentiable $\mathcal{H}^k\llcorner \mathbb W$-almost everywhere on $U$, and the following area formula holds: \begin{equation}\label{eqn:AreaElementFormula}
\mathcal{H}^k(\Phi(V))=\int_V J(d\Phi_x)d\mathcal{H}^k(x), \qquad \mbox{for every Borel set $V\subseteq U$},
\end{equation}
where $d\Phi_x$ is the Pansu differential of $\Phi$ at $x\in V$ and $J(d\Phi_x)$ is its Jacobian, see \eqref{def:definitionOfJ}.
\end{teorema}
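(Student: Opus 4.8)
The plan is to exploit the normality of $\mathbb{L}$ to reduce the statement to the classical Rademacher--Pansu theorem for Lipschitz maps between Carnot groups, and then to an area formula for such maps. First, the algebraic part. Since $\mathbb{L}$ is normal and homogeneous, its Lie algebra $\mathfrak{l}$ is a graded ideal of $\mathfrak{g}=\bigoplus_i V_i$, so $\mathfrak{g}/\mathfrak{l}$ is a stratified Lie algebra generated by the image of $V_1$, and the algebraic identification $\mathbb{W}\cong\mathbb{G}/\mathbb{L}$ (valid because $\mathbb{W},\mathbb{L}$ are complementary) is an isomorphism of Carnot groups once $\mathbb{W}$ carries the grading $\mathfrak{w}=\bigoplus_i(\mathfrak{w}\cap V_i)$; a dimension count on each layer shows this grading is generated by $\mathfrak{w}\cap V_1$, i.e.\ $\mathbb{W}$ is a Carnot subgroup, of homogeneous dimension $k$. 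Consequently the splitting projection $\pi_{\mathbb{W}}\colon\mathbb{G}\to\mathbb{W}$, $w\cdot\ell\mapsto w$, is a surjective homomorphism of Carnot groups, hence commutes with dilations and coincides with its own Pansu differential at every point.

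Next I would reduce to a Lipschitz graph map. Recall that $\phi$ being intrinsically Lipschitz is equivalent to a cone estimate $\|\pi_{\mathbb{L}}(\Phi(w)^{-1}\Phi(w'))\|\le L\,\|\pi_{\mathbb{W}}(\Phi(w)^{-1}\Phi(w'))\|$ for all $w,w'\in U$. Because $\pi_{\mathbb{W}}$ is a homomorphism, $\pi_{\mathbb{W}}(\Phi(w)^{-1}\Phi(w'))=w^{-1}w'$, and the quasi-triangle inequality for the homogeneous norm gives $d_{\mathbb{G}}(\Phi(w),\Phi(w'))\le \newC\, d_{\mathbb{W}}(w,w')$, where one uses that the restriction to the Carnot subgroup $\mathbb{W}$ of the homogeneous norm of $\mathbb{G}$ is comparable to $d_{\mathbb{W}}$. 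Thus $\Phi\colon(U,d_{\mathbb{W}})\to(\mathbb{G},d_{\mathbb{G}})$ is Lipschitz. (This is exactly where normality enters: without it $\pi_{\mathbb{W}}$ is not multiplicative and $\pi_{\mathbb{W}}(\Phi(w)^{-1}\Phi(w'))$ cannot be controlled by $d_{\mathbb{W}}(w,w')$.)

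For differentiability I would invoke the Rademacher--Pansu theorem for Lipschitz maps defined on a measurable subset of a Carnot group: blowing up $\Phi$ at $\mathcal{H}^k\llcorner\mathbb{W}$-a.e.\ density point $x$ of $U$ (using the Lipschitz bound and Arzel\`a--Ascoli) one obtains a homogeneous homomorphism $d\Phi_x\colon\mathbb{W}\to\mathbb{G}$. Differentiating the identity $\pi_{\mathbb{W}}\circ\Phi=\mathrm{id}_U$ along the blow-up yields $\pi_{\mathbb{W}}\circ d\Phi_x=\mathrm{id}_{\mathbb{W}}$; hence $\mathrm{Image}(d\Phi_x)$ is a homogeneous subgroup that projects onto $\mathbb{W}$ and meets $\mathbb{L}$ only in the identity, i.e.\ a subgroup complementary to $\mathbb{L}$. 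Equivalently $d\Phi_x$ is the graph map of the intrinsically linear map $d\phi_x:=\pi_{\mathbb{L}}\circ d\Phi_x\colon\mathbb{W}\to\mathbb{L}$, which is precisely the assertion that $\phi$ is intrinsically differentiable at $x$ with differential $d\phi_x$ and that $\Phi$ is Pansu differentiable there.

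Finally, the area formula. The map $\Phi$ is Lipschitz, injective (being a graph), and Pansu differentiable $\mathcal{H}^k$-a.e.\ on $U$, with $\mathrm{Image}(d\Phi_x)=:\mathbb{M}_x$ a homogeneous subgroup of homogeneous dimension $k$; on $\mathbb{M}_x$ the homomorphism $d\Phi_x\colon\mathbb{W}\to\mathbb{M}_x$ is a dilation-covariant isomorphism, so $\mathcal{H}^k(d\Phi_x(E))=J(d\Phi_x)\,\mathcal{H}^k(E)$ for every Borel $E\subseteq\mathbb{W}$, with $J(d\Phi_x)$ as in \eqref{def:definitionOfJ} (independence of $E$ follows from uniqueness of Haar measure on $\mathbb{M}_x$). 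To pass from this infinitesimal identity to \eqref{eqn:AreaElementFormula} I would run a Federer-type covering argument: fix $\eta>0$, decompose $U$ up to an $\mathcal{H}^k$-null set into countably many Borel pieces on each of which, after a left translation and the rescaling $\delta_{1/r}$, $\Phi$ is $(1+\eta)$-close in the $C^0$ sense to the relevant linear map; deduce $\mathcal{H}^k(\Phi(E))=(1+O(\eta))\,J(d\Phi_x)\,\mathcal{H}^k(E)$ on each piece by comparison with $\mathcal{H}^k$ on $\mathbb{M}_x$; sum over the pieces and let $\eta\to0$, using that $V\mapsto\mathcal{H}^k(\Phi(V))$ and $V\mapsto\int_V J(d\Phi_x)\,d\mathcal{H}^k(x)$ are Borel measures on $U$. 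I expect this last step to be the main obstacle: making the blow-up uniform on positive-measure pieces and controlling $\mathcal{H}^k$ of images of nearly linear maps requires the fine structure of the Hausdorff measure of the Carnot metric (density estimates on homogeneous subgroups, Vitali covering with respect to $d_{\mathbb{G}}$), and this is where the bulk of the technical work lies, the earlier steps being essentially soft consequences of the normality of $\mathbb{L}$.
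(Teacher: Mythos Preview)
Your overall strategy coincides with the paper's: normality of $\mathbb{L}$ makes $\mathbb{W}$ a Carnot subgroup and the graph map $\Phi$ metrically Lipschitz, Pansu--Rademacher gives a.e.\ Pansu differentiability of $\Phi$, this is converted into intrinsic differentiability of $\phi$, and an area formula finishes. The implementation differs in two minor points and one substantial one. For the fact that $\mathbb{W}$ is Carnot, the paper argues by a direct bracket computation (\cref{rem:KEY}) rather than via the quotient $\mathbb{G}/\mathbb{L}$; both are short and your quotient argument is correct. For $\pi_{\mathbb{W}}\circ d\Phi_x=\mathrm{id}_{\mathbb{W}}$, the paper does not invoke a chain rule but proves a separate density-point lemma (\cref{lem:LemmaBuono}); your chain-rule route is legitimate and arguably cleaner, since $\pi_{\mathbb{W}}$ is a homogeneous homomorphism when $\mathbb{L}$ is normal, though you still owe the short computation (the paper's \eqref{eqn:LongComputation}) showing that Pansu differentiability of $\Phi$ with a graph-type differential actually unwinds to the limit \eqref{eqn:IdInCoordinates2} defining intrinsic differentiability of $\phi$. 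The substantial difference is the area formula: the paper does \emph{not} redo any Federer-type covering argument, it simply cites Magnani's area formula for injective Lipschitz maps between Carnot groups and applies it to $\Phi$. So the step you flag as ``the main obstacle'' is in fact a black box here, and the earlier ``soft'' steps constitute the entire content of the paper's own proof.
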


We stress that the hypotheses of \cref{thm:AbelianRademacherIntro} are satisfied for example when $\mathbb W$ is horizontal. Thus our theorem holds for intriniscally Lipschitz horizontal surfaces in arbitrary Carnot groups. 
We stress also that an area formula for maps that parametrizes $C^1_{H}$-submanifolds, that is a more restrictive condition than being intrinsically Lipschitz, has been recently obtained in \cite[Theorem 1.1]{JNGV20}. In the latter reference the authors call {\em $C^1_{\mathrm H}$-submanifold} a subset of a Carnot group $\mathbb G$ that is, locally around every point, the zero-level set of a $C^1_{\mathrm H}$ function $f$ defined on an open subset of $\mathbb G$ with value in a Carnot group $\mathbb G'$, and such that moreover the Pansu differential $Df$ is surjective and $\mathrm{Ker}(Df)$ splits $\mathbb G$.

The proof of \cref{thm:AbelianRademacherIntro} is considerably simpler than the proof of a \textbf{low-codimensional} Rademacher theorem because in that case it may not be true, as in our case, that the intrinsic Lipschitz property of $\phi$ reads as the graph map $\Phi$ being Lipschitz (see \cref{rem:CounterexampleNormal}).
In fact we apply Pansu-Rademacher theorem
and Magnani's area formula
to deduce that the graph map $\Phi$ is Pansu differentiable almost everywhere and the area formula holds. Thus in order to complete the proof of \cref{thm:AbelianRademacherIntro}, we are left to relate the Pansu differentiability of the graph map $\Phi$ with the intrinsic differentiability of the map $\phi$, and this is done in \cref{thm:ToolForRademacher}.

\textbf{Acknowledgments}: The first author is partially supported by the European Research Council (ERC Starting Grant 713998 GeoMeG `\emph{Geometry of Metric Groups}'). The authors are grateful to Enrico Le Donne for several suggestions that led to an improvement of this note. They are also grateful to Sebastiano Don for precious comments.

\section{Preliminaries}\label{sec:Prel}
For the basic terminology on Carnot groups and graded groups we refer the reader to \cite{LD17}. Every Carnot group in this note will come together with a stratification $\mathrm{Lie}(\mathbb G)=V_1\oplus\dots\oplus V_s$. We recall some notation.  
Every graded group has a one-parameter family of dilations that we denote by $\{\delta_\lambda: \lambda >0\}$. 
We will indicate with $\delta_{\lambda}$ both the dilation of factor $\lambda$ on the group and its differential.
 
 Given a graded group $\mathbb G$, we fix a homogeneous norm $\|\cdot\|$ on $\mathbb G$, which is unique up to
 bi-Lipschitz equivalence. 
 Moreover, by \cite[Theorem 5.1]{step2} on every Carnot group there exists a slight variation of the anisotropic homogeneous norm that induces a left-invariant homogeneous distance $d$. \textbf{We eventually fix this particular homogeneous norm from now on}. We also recall that with {\em homogeneous homomorphism} we mean a homomorphism that commutes with $\delta_{\lambda}$, for every $\lambda>0$. We now deal with complementary subgroups and maps between them.
 
\begin{definizione}[Complementary subgroups]\label{def:ComplementarySubgroups}
Given a Carnot group $\mathbb G$ with identity $e$, we say that two subgroups $\mathbb W$ and $\mathbb L$ are \emph{complementary subgroups} in $\mathbb G$ if they are {\em homogeneous}, i.e., closed under the action of $\delta_{\lambda}$ for every $\lambda>0$, $\mathbb G=\mathbb W\cdot \mathbb L$, and $\mathbb W\cap \mathbb L=\{e\}$. 
\end{definizione}

Given two complementary subgroups $\mathbb W$ and $\mathbb L$ in a Carnot group $\mathbb G$, we denote the {\em projection maps} from $\mathbb G$ onto $\mathbb W$ and onto $\mathbb L$ by $\pi_{\mathbb W}$ and $\pi_{\mathbb L}$, respectively. Defining $g_{\mathbb W}:=\pi_{\mathbb W} g$ and $g_{\mathbb L} := \pi_{\mathbb L} g$ for any $g\in \mathbb G$, one has
\begin{equation*}\label{eqn:ComponentsSplitting}
g=(\pi_{\mathbb W} g)\cdot(\pi_{\mathbb L} g)= g_{\mathbb W}\cdot g_{\mathbb L}.
\end{equation*}
We recall the following basic terminology: a {\em horizontal subgroup} of a Carnot group $\mathbb G$ is a homogeneous subgroup of it that is contained in $\exp(V_1)$, where $V_1$ is the first layer of the stratification of $\mathrm{Lie}(\mathbb G)$; a {\em Carnot subgroup} $\mathbb W$ of a Carnot group $\mathbb G$ is a homogeneous subgroup of it such that the first layer $V_1(\mathbb W):=V_1\cap\mathrm{Lie}(\mathbb W)$ of the grading of $\mathrm{Lie}(\mathbb W)$ inherited from the stratification of $\mathrm{Lie}(\mathbb G)$ is the first layer of a stratification of $\mathrm{Lie}(\mathbb W)$. Before going on we add the following remark, which will be important in the proof of \cref{thm:AbelianRademacherIntro}.
\begin{osservazione}[The complement of a normal subgroup is Carnot]\label{rem:KEY}
We claim that if $\mathbb W$ and $\mathbb L$ are two homogeneous complementary subgroups of a Carnot group $\mathbb G$, and $\mathbb L$ is \textbf{normal}, then $\mathbb W$ is a \textbf{Carnot subgroup} of $\mathbb G$. First we claim that $\mathrm{Lie}(\mathbb L)$ is an ideal of $\mathrm{Lie}(\mathbb G)$. Indeed, if we set $\mathrm{Ad}$ to be the adjoint operator on $\mathbb G$, see \cite[page 12]{CG90}, for every $X\in\mathrm{Lie}(\mathbb G)$ and $Y\in \mathrm{Lie}(\mathbb L)$ we have $[X,Y]=\frac{d}{dt}_{|_{t=0}}\mathrm{Ad}_{\exp(tX)}Y\in\mathrm{Lie}(\mathbb L)$, where the last inclusion follows from the fact that $\mathrm{Ad}_{\exp(tX)}Y\in\mathrm{Lie}(\mathbb L)$ for every $t>0$, because $\mathbb L$ is normal. Therefore, if $\mathrm{Lie}(\mathbb G)=V_1\oplus\dots\oplus V_s$ is the stratification of $\mathrm{Lie}(\mathbb G)$, in order to prove that $\mathbb W$ is a Carnot subgroup of $\mathbb G$ it is enough to prove that 
\begin{equation}\label{eqn:Incl}
V_{i+1}\cap\mathrm{Lie}(\mathbb W)\subseteq [V_{1}\cap\mathrm{Lie}(\mathbb W),V_{i}\cap\mathrm{Lie}(\mathbb W)],
\end{equation} 
for every $i=1,\dots,s$; indeed the other inclusion $V_{i+1}\cap\mathrm{Lie}(\mathbb W)\supseteq [V_{1}\cap\mathrm{Lie}(\mathbb W),V_{i}\cap\mathrm{Lie}(\mathbb W)]$ follows from the fact that $\mathrm{Lie}(\mathbb W)$ is a Lie subalgebra of $\mathrm{Lie}(\mathbb G)$ because $\mathbb W$ is a homogeneous subgroup of $\mathbb G$. The inclusion \eqref{eqn:Incl} readily follows from the following observation, extended linearly: if $p_1\in V_1$ and $p_i\in V_i$ are such that $[p_1,p_i]\in\mathrm{Lie}(\mathbb W)$, then $[\pi_{\mathrm{Lie}(\mathbb W)}p_1,\pi_{\mathrm{Lie}(\mathbb W)}p_i]=[p_1,p_i]$, where $\pi_{\mathrm{Lie}(\mathbb W)}$ is the projection associated to the splitting $\mathrm{Lie}(\mathbb G)=\mathrm{Lie}(\mathbb W)\oplus\mathrm{Lie}(\mathbb L)$. Indeed, $[p_1,p_i]+\mathrm{Lie}(\mathbb L)=[\pi_{\mathrm{Lie}(\mathbb W)}p_1+\pi_{\mathrm{Lie}(\mathbb L)}p_1,\pi_{\mathrm{Lie}(\mathbb W)}p_i+\pi_{\mathrm{Lie}(\mathbb L)}p_i]+\mathrm{Lie}(\mathbb L)=[\pi_{\mathrm{Lie}(\mathbb W)}p_1,\pi_{\mathrm{Lie}(\mathbb W)}p_i]+\mathrm{Lie}(\mathbb L)$, since $\mathrm{Lie}(\mathbb L)$ is an ideal. From the latter equality the last claim follows since $[p_1,p_i]\in\mathrm{Lie}(\mathbb W)$. 

\end{osservazione}
We now describe what is the parametrizing function of some translation of the intrinsic graph of a function. \textbf{Unless anything different is declared, we eventually fix from now on two arbitrary complementary subgroups $\mathbb W$ and $\mathbb L$ of a Carnot group $\mathbb G$}.
\begin{definizione}[Intrinsic graph]\label{def:IntrinsicGraph}
    Given a function $\phi:U\subseteq \mathbb W\to\mathbb L$, we define the {\em intrinsic graph} of $\phi$ as follows:
    $$
    \mathrm{graph}(\phi):= \{\Phi(w):= w\cdot \phi(w): w\in U\}=:\Phi(U).
    $$
\end{definizione}

\begin{definizione}[Intrinsic translation of a function]\label{def:PhiQ}
	Given a function $\phi\colon U\subseteq\mathbb W\to\mathbb L$, we define, for every $q\in\mathbb G$,
	\[
	{U}_q:=\{a\in\mathbb W: \pi_{\mathbb W}(q^{-1}\cdot a)\in {U}\},
	\]
	and ${\phi}_q\colon{U}_q\subseteq \mathbb W\to\mathbb L$ by setting
	\begin{equation}\label{eqn:Phiq}
	{\phi}_q(a):=\big(
	\pi_{\mathbb L}(q^{-1}\cdot a)\big)^{-1}\cdot {\phi}\big(\pi_{\mathbb W}(q^{-1}\cdot a)\big).
	\end{equation}
\end{definizione}
For the following proposition we also refer the reader to \cite[Proposition 2.21 and Remark 2.23]{FranchiSerapioni16}.
\begin{proposizione}\label{prop:PropertiesOfIntrinsicTranslation}
	Given a function ${\phi}\colon{U}\subseteq\mathbb W\to\mathbb L$, and $q\in\mathbb G$, the following facts hold:
	\begin{itemize}
		\item[(a)] $\mbox{graph}({\phi}_q)=q\cdot\mbox{graph}({\phi})$;
		\item[(b)] if $\mathbb L$ is normal one gets that, for all $a\in U$,
		\begin{equation}\label{eqn:TransformationLNormal}
		    \pi_{\mathbb W}(q^{-1}\cdot a)=q_{\mathbb W}^{-1}\cdot a, \qquad \pi_{\mathbb L}(q^{-1}\cdot a)=a^{-1}\cdot q_{\mathbb W}\cdot q_{\mathbb L}^{-1}\cdot q_{\mathbb W}^{-1}\cdot a,
		\end{equation}
		and thus
		\begin{equation}\label{eqn:TransformationLNormal2}
		    {\phi}_q(a)=a^{-1}\cdot q_{\mathbb W} \cdot q_{\mathbb L}\cdot q_{\mathbb W}^{-1}\cdot a\cdot{\phi}(q_{\mathbb W}^{-1}\cdot a), \qquad  U_q = q_{\mathbb W}\cdot  U;
		\end{equation}
	\end{itemize}
\end{proposizione}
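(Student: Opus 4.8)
The plan is to verify \textup{(a)} and \textup{(b)} by directly unwinding \cref{def:IntrinsicGraph} and \cref{def:PhiQ}, using repeatedly that the decomposition $g=g_{\mathbb W}\cdot g_{\mathbb L}$ granted by \cref{def:ComplementarySubgroups} is \emph{unique}: if $w_1\cdot\ell_1=w_2\cdot\ell_2$ with $w_i\in\mathbb W$ and $\ell_i\in\mathbb L$, then $w_2^{-1}\cdot w_1=\ell_2\cdot\ell_1^{-1}\in\mathbb W\cap\mathbb L=\{e\}$, whence $w_1=w_2$ and $\ell_1=\ell_2$.

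For \textup{(a)} I would prove the two inclusions. If $a\in U_q$, set $w:=\pi_{\mathbb W}(q^{-1}\cdot a)\in U$ and $\ell:=\pi_{\mathbb L}(q^{-1}\cdot a)$, so that $q^{-1}\cdot a=w\cdot\ell$ and hence $a=q\cdot w\cdot\ell$; then by \eqref{eqn:Phiq} one gets $a\cdot\phi_q(a)=a\cdot\ell^{-1}\cdot\phi(w)=q\cdot w\cdot\ell\cdot\ell^{-1}\cdot\phi(w)=q\cdot\big(w\cdot\phi(w)\big)\in q\cdot\mathrm{graph}(\phi)$, which gives $\mathrm{graph}(\phi_q)\subseteq q\cdot\mathrm{graph}(\phi)$. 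Conversely, given $w\in U$, set $g:=q\cdot w\cdot\phi(w)$ and decompose $g=g_{\mathbb W}\cdot g_{\mathbb L}$; then $q^{-1}\cdot g_{\mathbb W}=q^{-1}\cdot g\cdot g_{\mathbb L}^{-1}=w\cdot\big(\phi(w)\cdot g_{\mathbb L}^{-1}\big)$ with $w\in\mathbb W$ and $\phi(w)\cdot g_{\mathbb L}^{-1}\in\mathbb L$, so uniqueness yields $\pi_{\mathbb W}(q^{-1}\cdot g_{\mathbb W})=w\in U$ (hence $g_{\mathbb W}\in U_q$) and $\pi_{\mathbb L}(q^{-1}\cdot g_{\mathbb W})=\phi(w)\cdot g_{\mathbb L}^{-1}$; plugging this into \eqref{eqn:Phiq} gives $\phi_q(g_{\mathbb W})=\big(\phi(w)\cdot g_{\mathbb L}^{-1}\big)^{-1}\cdot\phi(w)=g_{\mathbb L}$, so that $g=g_{\mathbb W}\cdot\phi_q(g_{\mathbb W})\in\mathrm{graph}(\phi_q)$.

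For \textup{(b)}, assume $\mathbb L$ normal. Writing $q^{-1}=q_{\mathbb L}^{-1}\cdot q_{\mathbb W}^{-1}$ and inserting $(q_{\mathbb W}^{-1}\cdot a)\cdot(q_{\mathbb W}^{-1}\cdot a)^{-1}=e$, I obtain, for every $a\in\mathbb W$, the identity $q^{-1}\cdot a=(q_{\mathbb W}^{-1}\cdot a)\cdot\big[(q_{\mathbb W}^{-1}\cdot a)^{-1}\cdot q_{\mathbb L}^{-1}\cdot(q_{\mathbb W}^{-1}\cdot a)\big]$. The first factor lies in $\mathbb W$, while the bracketed factor is a conjugate of $q_{\mathbb L}^{-1}\in\mathbb L$, hence it lies in $\mathbb L$ precisely because $\mathbb L$ is normal; together with $(q_{\mathbb W}^{-1}\cdot a)^{-1}=a^{-1}\cdot q_{\mathbb W}$ and the uniqueness of the decomposition this is exactly \eqref{eqn:TransformationLNormal} (and in particular it holds for every $a\in U$). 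The description of $U_q$ is then immediate from \cref{def:PhiQ}: $a\in U_q$ iff $q_{\mathbb W}^{-1}\cdot a\in U$ iff $a\in q_{\mathbb W}\cdot U$, and $q_{\mathbb W}\cdot U\subseteq\mathbb W$ since $\mathbb W$ is a subgroup. Finally, substituting the two formulas of \eqref{eqn:TransformationLNormal} into \eqref{eqn:Phiq} and using $\big(a^{-1}\cdot q_{\mathbb W}\cdot q_{\mathbb L}^{-1}\cdot q_{\mathbb W}^{-1}\cdot a\big)^{-1}=a^{-1}\cdot q_{\mathbb W}\cdot q_{\mathbb L}\cdot q_{\mathbb W}^{-1}\cdot a$ gives \eqref{eqn:TransformationLNormal2}.

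I do not expect a genuine obstacle here: the whole content is bookkeeping with the group law, and the single structural input is that normality of $\mathbb L$ is exactly what keeps the conjugate $(q_{\mathbb W}^{-1}\cdot a)^{-1}\cdot q_{\mathbb L}^{-1}\cdot(q_{\mathbb W}^{-1}\cdot a)$ inside $\mathbb L$. The only point needing mild care is to arrange, at each step, the relevant element as a product of something in $\mathbb W$ by something in $\mathbb L$ \emph{before} invoking the uniqueness of the splitting; one could alternatively derive \textup{(b)} from \textup{(a)} and \eqref{eqn:Phiq}, but the direct computation above seems cleaner.
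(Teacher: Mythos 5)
Your proof is correct and follows essentially the same route as the paper's: part (a) comes from unwinding \eqref{eqn:Phiq} to get $a\cdot\phi_q(a)=q\cdot\pi_{\mathbb W}(q^{-1}\cdot a)\cdot\phi(\pi_{\mathbb W}(q^{-1}\cdot a))$, and part (b) from exhibiting $q^{-1}\cdot a$ as the product of $q_{\mathbb W}^{-1}\cdot a\in\mathbb W$ with the conjugate $a^{-1}\cdot q_{\mathbb W}\cdot q_{\mathbb L}^{-1}\cdot q_{\mathbb W}^{-1}\cdot a$, which lies in $\mathbb L$ precisely by normality, plus uniqueness of the splitting. The only difference is that you spell out both inclusions in (a) explicitly, whereas the paper treats the reverse inclusion as immediate from the same identity.
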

\begin{proof}
	The proof of (a) directly follows from \eqref{eqn:Phiq}, which yields
	\begin{equation}\label{eqn:GraphOfPhiQEqualsQGraphOfPhi}
	a\cdot{\phi}_q(a)=q\cdot\pi_{\mathbb W}(q^{-1}\cdot a)\cdot {\phi}(\pi_{\mathbb W}(q^{-1}\cdot a)),\qquad \forall a\in  U_q.
	\end{equation}
    Let us prove (b). Since $\mathbb L$ is normal, the following holds: $$a^{-1}\cdot q_{\mathbb W}\cdot q_{\mathbb L}^{-1}\cdot q_{\mathbb W}^{-1}\cdot a= a^{-1}\cdot q_{\mathbb W}\cdot q_{\mathbb L}^{-1}\cdot (a^{-1}\cdot q_{\mathbb W})^{-1}\in \mathbb L.$$
    Moreover it holds that $q_{\mathbb W}^{-1}\cdot a\in \mathbb W$, and since 
    $$
    q_{\mathbb W}^{-1}\cdot a\cdot a^{-1}\cdot q_{\mathbb W}\cdot q_{\mathbb L}^{-1}\cdot q_{\mathbb W}^{-1}\cdot a = q^{-1}\cdot a,
    $$
    the equation \eqref{eqn:TransformationLNormal} holds and then \eqref{eqn:TransformationLNormal2} is a consequence of \eqref{eqn:TransformationLNormal} and \eqref{eqn:Phiq}.
\end{proof}

For the notion of intrinsically Lipschitz function we refer the reader to \cite{FranchiSerapioni16}. We explicitly recall here, for the reader's benefit, only the notions of intrinsically linear and intrinsically differentiable functions. For these definitions, and the study of some properties related, we refer the reader to \cite{ASCV06, FSSC06, FSSC11, FMS14, SC16, ADDDLD20}. Eventually, in order to fix the notation, we recall the notion of Pansu differentiability, see also \cite{Pansu}.
\begin{definizione}[Intrinsically linear function]
    The map $\ell: \mathbb W \to \mathbb L$ is said to be {\em intrinsically linear} if $\mathrm{graph}(\ell)$ is a homogeneous subgroup of $\mathbb G$.
\end{definizione}
\begin{definizione}[Intrinsically differentiable function]\label{defiintrinsicdiff}
   Let ${\phi}\colon{U}\subseteq \mathbb W \to\mathbb L$ be a function with $U$ \textbf{Borel} in $\mathbb W$. Fix a \textbf{density point}\footnote{If $k:=\mathrm{dim}_H(\mathbb W)$ is the Hausdorff dimension of $\mathbb W$, we say that $a_0\in U$ is a {\em density point} of $U$, and we write $a_0\in\mathcal D(U)$, if \\ $\mathcal{H}^k\llcorner \mathbb{W}(U\cap B(a_0,r))/\mathcal{H}^k\llcorner \mathbb{W}(B(a_0,r))\to 1$ as $r$ goes to $0$.} $a_0\in\mathcal D({U})$ of $U$, let $p_0:={\phi}(a_0)^{-1}\cdot a_0^{-1}$ and denote by ${\phi}_{p_0}\colon {U}_{p_0}\subseteq \mathbb W\to\mathbb L$ the shifted function introduced in \cref{def:PhiQ}. We say that ${\phi}$ is {\em intrinsically differentiable} at $a_0$
	if there is an intrinsically linear map $d^{\phi}\phi_{a_0}\colon\mathbb W\to\mathbb L$ such that
	\begin{equation}\label{eqn:IdInCoordinates2}
	\lim_{b\to e,\, b\in U_{p_0}}\frac{\|d^{\phi}\phi_{a_0}[b]^{-1}\cdot{\phi}_{p_0}(b)\|}{\|b\|}= 0.
	\end{equation}
	The function $d^{\phi}\phi_{a_0}$ is called the {\em intrinsic differential} of $\phi$ at $a_0$. 
	Notice that one can take the limit in \eqref{eqn:IdInCoordinates2} because, if $a_0\in \mathcal D( U)$, then $e \in \mathcal D( U_{p_0})$. This last claim comes from the invariant properties of \cref{prop:PropertiesOfIntrinsicTranslation}.
\end{definizione}

\begin{osservazione}[Intrinsic differentiability and tangent subgroups]\label{rem:TangentSubgroups}
Whenever the intrinsic differential introduced in \cref{defiintrinsicdiff} exists, it is unique: see \cite[Theorem 3.2.8]{FMS14}. Let us recall the notion of tangent subgroup to the graph of a function.
If we fix $\phi: U\subseteq \mathbb W \to \mathbb L$, 
we say  that a homogeneous subgroup $\mathbb T$ of $\mathbb G$ is a {\em tangent subgroup} to $\mathrm{graph} (\phi)$ at $a_0\cdot\phi(a_0)$ if the following facts hold:
\begin{enumerate}
\item[(a)] $\mathbb T$ is a complementary subgroup of $\mathbb L$;
\item[(b)] The limit
\begin{equation*}
\lim_{\lambda \to \infty } \delta _\lambda \left((a_0\cdot \phi(a_0))^{-1}\cdot\mathrm{graph}(\phi) \right)  =\mathbb T,
\end{equation*}
holds in the sense of Hausdorff convergence on compact subsets of $\mathbb G$. 
\end{enumerate}
We notice that what the authors prove in \cite[Theorem 3.2.8]{FMS14} is the following: a function $\phi:U\subseteq \mathbb W\to \mathbb L$, with $U$ \textbf{open}, is intrinsically differentiable at $a_0$ if and only if $\mathrm{graph}(\phi)$ has a tangent subgroup $\mathbb T$ at $a_0\cdot\phi(a_0)$ and moreover $\mathbb T=\mathrm{graph}(d^\phi \phi_{a_0})$. 
\end{osservazione}
\begin{definizione}[Pansu differentiability]\label{def:Pansudiff}
Let $\mathbb W$ and $\mathbb G$ be two arbitrary \textbf{graded groups} endowed with two homogeneous left-invariant distances $d_{\mathbb W}$ and $d_{\mathbb G}$, respectively. Given $f:U\subseteq \mathbb W\to\mathbb G$ with $U$ \textbf{Borel}  and a \textbf{density point} $a_0\in\mathcal{D}(U)$, we say that $f$ is {\em Pansu differentiable} at $a_0$ if there exists a homogeneous homomorphism $df_{a_0}:\mathbb W\to\mathbb G$, that we call {\em Pansu differential} at $a_0$, such that
$$
\lim_{a\to a_0\, a\in U}\frac{d_{\mathbb G}(f(a_0)^{-1}\cdot f(a),df_{a_0}[a_0^{-1}\cdot a])}{d_{\mathbb W}(a,a_0)}=0.
$$
\end{definizione}

\section{Proof of the theorem}\label{sec:Theorem}
In what follows we prove that, in case $\mathbb L$ is \textbf{normal}, the intrinsic differentiability of a function $\phi$ can be read as the Pansu differentiability of the graph map $\Phi$. For the forthcoming lemma we also refer the reader to \cite[second item of Corollary 3.1.4]{FMS14}.
\begin{lemma}\label{prop:PansuDifferentiabilityIntrinsicLinearLNormal} 
    Let $\mathbb W$ and $\mathbb L$ be two complementary subgroups of a Carnot group $\mathbb G$, with $\mathbb L$ \textbf{normal}. The map $\ell:\mathbb W\to\mathbb L$ is intrinsically linear if and only if the graph map of $\ell$, i.e., $L:\mathbb W\to\mathbb G$ defined as $L(w):=w\cdot \ell(w)$, is a homogeneous homomorphism. 
\end{lemma}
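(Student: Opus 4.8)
The plan is to reduce the whole statement to one observation: normality of $\mathbb L$ forces $\pi_{\mathbb W}\colon\mathbb G\to\mathbb W$ to be a group homomorphism, after which the equivalence is purely formal. First I would record two facts valid for \emph{any} pair of complementary subgroups and any $\phi$ (here $\phi=\ell$): the graph map $L$ is a bijection of $\mathbb W$ onto $\mathrm{graph}(\ell)$ and $\pi_{\mathbb W}\circ L=\mathrm{id}_{\mathbb W}$, both being immediate from the uniqueness of the splitting $g=g_{\mathbb W}\cdot g_{\mathbb L}$; and $\pi_{\mathbb W}$ commutes with every dilation $\delta_\lambda$, since $\mathbb W$ and $\mathbb L$ are homogeneous. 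I would also recall that, by definition, ``$\ell$ is intrinsically linear'' means ``$\mathrm{graph}(\ell)$ is a homogeneous subgroup of $\mathbb G$'', so the content is the equivalence of this with ``$L$ is a homogeneous homomorphism''.

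For the direction ``$L$ homogeneous homomorphism $\Rightarrow$ $\ell$ intrinsically linear'' I would argue directly, without using normality: $\mathrm{graph}(\ell)=L(\mathbb W)$ is the image of the homogeneous subgroup $\mathbb W$ under a homomorphism commuting with dilations, hence a subgroup of $\mathbb G$ stable under all $\delta_\lambda$; and since $\ell=\pi_{\mathbb L}\circ L$ is continuous, $\mathrm{graph}(\ell)$ is closed, so it is a homogeneous subgroup.

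For the converse, which is where normality is used, the key step is to show that $\pi_{\mathbb W}\colon\mathbb G\to\mathbb W$ is a homomorphism when $\mathbb L$ is normal. This follows, in the spirit of \eqref{eqn:TransformationLNormal}, by writing for $g,h\in\mathbb G$
\[
g\cdot h=\big(g_{\mathbb W}\cdot h_{\mathbb W}\big)\cdot\big(h_{\mathbb W}^{-1}\cdot g_{\mathbb L}\cdot h_{\mathbb W}\cdot h_{\mathbb L}\big),
\]
and noting that the first factor lies in $\mathbb W$ while the second lies in $\mathbb L$ by normality, so $\pi_{\mathbb W}(g\cdot h)=g_{\mathbb W}\cdot h_{\mathbb W}$. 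Assuming then that $\mathrm{graph}(\ell)$ is a homogeneous subgroup, the restriction $\pi_{\mathbb W}|_{\mathrm{graph}(\ell)}\colon\mathrm{graph}(\ell)\to\mathbb W$ is a bijective group homomorphism between homogeneous subgroups that commutes with dilations, and by the first paragraph its inverse is exactly $L$. Since the inverse of a bijective homomorphism commuting with the $\delta_\lambda$ is again such, $L$ is a homogeneous homomorphism.

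The only genuine obstacle is this identification of $\pi_{\mathbb W}$ with a homomorphism in the normal case: without it, knowing that $\mathrm{graph}(\ell)$ is a group says nothing about $L$, because the group structure on $\mathrm{graph}(\ell)$ agrees with the one transported from $\mathbb W$ through $L$ precisely when $\pi_{\mathbb W}$ respects the group laws. Everything else — bijectivity of $L$, compatibility with dilations, and passing to the inverse homomorphism — is routine.
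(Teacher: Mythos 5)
Your argument is correct. Note that the paper does not actually prove this lemma: it is stated with a pointer to \cite[second item of Corollary 3.1.4]{FMS14}, so there is no in-paper proof to match; your proposal supplies a self-contained argument, and it is the natural one. The key point you isolate --- that normality of $\mathbb L$ makes $\pi_{\mathbb W}$ a group homomorphism, via the identity $g\cdot h=(g_{\mathbb W}\cdot h_{\mathbb W})\cdot(h_{\mathbb W}^{-1}\cdot g_{\mathbb L}\cdot h_{\mathbb W}\cdot h_{\mathbb L})$ --- is exactly the mechanism behind the cited result, and your bookkeeping ($\pi_{\mathbb W}\circ L=\mathrm{id}_{\mathbb W}$, $L$ bijective onto $\mathrm{graph}(\ell)$, inverses of dilation-equivariant isomorphisms are again such) is sound; you also correctly observe that the implication ``$L$ homogeneous homomorphism $\Rightarrow$ $\ell$ intrinsically linear'' uses no normality. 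Two minor remarks. First, the closedness aside in that direction is superfluous: in this paper a homogeneous subgroup is only required to be a subgroup invariant under every $\delta_\lambda$ (see \cref{def:ComplementarySubgroups}), so $L(\mathbb W)$ being a dilation-invariant subgroup already suffices. Second, if you do want to invoke continuity of $L$ (or of $\ell=\pi_{\mathbb L}\circ L$), be aware that continuity of a homogeneous homomorphism is not part of the paper's definition and would itself need a one-line argument (homogeneity plus the homomorphism property along horizontal one-parameter subgroups); since nothing in the statement requires it, it is cleaner to drop that sentence.
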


The forthcoming proposition is inspired by \cite[Proposition 3.25(i)]{ArenaSerapioni}. We give a detailed proof in our context, since the last part of the argument, i.e., the part in which we invoke the forthcoming \cref{lem:LemmaBuono}, is different with respect to the reference.
\begin{proposizione}\label{thm:ToolForRademacher}
Let $\mathbb W$ and $\mathbb L$ be two complementary subgroups of a Carnot group $\mathbb G$, with $\mathbb L$ \textbf{normal}. Let $\phi: U\subseteq \mathbb W\to \mathbb L$ be a function with $U$ \textbf{Borel}. Given a \textbf{density point} $a_0\in\mathcal D(U)$, we have that $\phi$ is intrinsically differentiable at $a_0$ if and only if the graph map $\Phi: U\subseteq \mathbb W\to \mathbb G$ is Pansu differentiable at $a_0$. Moreover if any of the previous two holds we have the following formula:
$$
d\Phi_{a_0}[w]=w\cdot d^{\phi}\phi_{a_0}[w], \qquad \forall w\in\mathbb W.
$$
\end{proposizione}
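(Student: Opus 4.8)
The plan is to unwind the two differentiability notions directly from their definitions and check that they express the same limit, up to the bookkeeping of the intrinsic translation. First I would set up notation: fix the density point $a_0\in\mathcal D(U)$, put $p_0:=\phi(a_0)^{-1}\cdot a_0^{-1}$ as in \cref{defiintrinsicdiff}, and recall from \cref{prop:PropertiesOfIntrinsicTranslation}(a) that $\mathrm{graph}(\phi_{p_0})=p_0\cdot\mathrm{graph}(\phi)$, which at the level of graph maps means $\Phi_{p_0}(b)=p_0\cdot\Phi(\pi_{\mathbb W}(p_0^{-1}\cdot b))$ where $\Phi_{p_0}(b):=b\cdot\phi_{p_0}(b)$. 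Note that $p_0^{-1}=a_0\cdot\phi(a_0)=\Phi(a_0)$, so $p_0=\Phi(a_0)^{-1}$ and the translated graph map is exactly $\Phi(a_0)^{-1}\cdot\Phi(\cdot)$ reparametrized on $\mathbb W$ via $\pi_{\mathbb W}(p_0^{-1}\cdot\,\cdot)$; because $\mathbb L$ is normal, \eqref{eqn:TransformationLNormal} gives $\pi_{\mathbb W}(p_0^{-1}\cdot b)=(p_0)_{\mathbb W}^{-1}\cdot b=a_0^{-1}\cdot b$ (since $(p_0)_{\mathbb W}=a_0^{-1}$), so $U_{p_0}=a_0^{-1}\cdot U$ and, crucially, $\Phi_{p_0}(b)=\Phi(a_0)^{-1}\cdot\Phi(a_0\cdot b)$ after the substitution. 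This is the identity that links the shifted function with the shifted graph map.

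Next I would translate the Pansu-differentiability condition for $\Phi$ at $a_0$. Writing $a=a_0\cdot b$, the defining quotient in \cref{def:Pansudiff} becomes $d_{\mathbb G}(\Phi(a_0)^{-1}\cdot\Phi(a_0\cdot b), d\Phi_{a_0}[b])/d_{\mathbb W}(a_0\cdot b,a_0)$, and $d_{\mathbb W}(a_0\cdot b,a_0)=\|b\|$ by left-invariance (restricting the ambient norm to $\mathbb W$, which is how the distance on $\mathbb W$ is chosen). By the identity from the first paragraph this numerator is $d_{\mathbb G}(\Phi_{p_0}(b), d\Phi_{a_0}[b])$, i.e.\ $\|d\Phi_{a_0}[b]^{-1}\cdot\Phi_{p_0}(b)\|$. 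So $\Phi$ is Pansu differentiable at $a_0$ with differential $d\Phi_{a_0}$ exactly when $\|d\Phi_{a_0}[b]^{-1}\cdot\Phi_{p_0}(b)\|/\|b\|\to 0$ as $b\to e$ in $U_{p_0}$. Now I compare with \eqref{eqn:IdInCoordinates2}: there the numerator is $\|d^\phi\phi_{a_0}[b]^{-1}\cdot\phi_{p_0}(b)\|$, where $d^\phi\phi_{a_0}$ is intrinsically linear, i.e.\ (by \cref{prop:PansuDifferentiabilityIntrinsicLinearLNormal}) its graph map $b\mapsto b\cdot d^\phi\phi_{a_0}[b]$ is a homogeneous homomorphism. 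The remaining point is to show that for such a map, $\|d\Phi_{a_0}[b]^{-1}\cdot\Phi_{p_0}(b)\|$ and $\|d^\phi\phi_{a_0}[b]^{-1}\cdot\phi_{p_0}(b)\|$ are comparable (bounded by each other up to multiplicative constants), so that one limit is zero iff the other is, and to identify $d\Phi_{a_0}[w]=w\cdot d^\phi\phi_{a_0}[w]$.

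For the comparison I would argue as follows. Suppose first $\phi$ is intrinsically differentiable at $a_0$ with intrinsic differential $\ell:=d^\phi\phi_{a_0}$, and let $L(w):=w\cdot\ell(w)$ be its graph map, a homogeneous homomorphism by \cref{prop:PansuDifferentiabilityIntrinsicLinearLNormal}. Write $b\in U_{p_0}$, $\phi_{p_0}(b)\in\mathbb L$, and observe that $\Phi_{p_0}(b)=b\cdot\phi_{p_0}(b)$ while $L(b)=b\cdot\ell(b)$; hence $L(b)^{-1}\cdot\Phi_{p_0}(b)=\ell(b)^{-1}\cdot b^{-1}\cdot b\cdot\phi_{p_0}(b)=\ell(b)^{-1}\cdot\phi_{p_0}(b)$, which is an element of $\mathbb L$. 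So $\|L(b)^{-1}\cdot\Phi_{p_0}(b)\|=\|\ell(b)^{-1}\cdot\phi_{p_0}(b)\|$ — here the two quantities are not merely comparable but literally equal, which makes the equivalence immediate and simultaneously forces $d\Phi_{a_0}=L$. For the converse direction, if $\Phi$ is Pansu differentiable at $a_0$ with homomorphism differential $D:=d\Phi_{a_0}:\mathbb W\to\mathbb G$, I would first check $D$ takes the form $D(w)=w\cdot\ell(w)$ for some $\ell:\mathbb W\to\mathbb L$: indeed $\pi_{\mathbb W}\circ D$ is a homogeneous homomorphism $\mathbb W\to\mathbb W$ which one shows equals the identity by evaluating the Pansu limit against $\pi_{\mathbb W}$ and using that $\pi_{\mathbb W}(\Phi_{p_0}(b))=b$; then $\ell:=\pi_{\mathbb L}\circ D$ is intrinsically linear by \cref{prop:PansuDifferentiabilityIntrinsicLinearLNormal}, and the same algebraic identity $D(b)^{-1}\cdot\Phi_{p_0}(b)=\ell(b)^{-1}\cdot\phi_{p_0}(b)$ gives \eqref{eqn:IdInCoordinates2}. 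The main obstacle I anticipate is this last step — verifying that a homogeneous homomorphism $\mathbb W\to\mathbb G$ arising as a Pansu differential of a graph map automatically splits as $w\mapsto w\cdot(\text{something in }\mathbb L)$, i.e.\ that its $\mathbb W$-component is the identity; one needs to exploit that $\Phi_{p_0}$ has $\mathbb W$-projection equal to $b$ exactly (not just asymptotically) together with uniqueness of the Pansu differential and continuity of $\pi_{\mathbb W}$, and this is where the normality of $\mathbb L$ (through the clean formula \eqref{eqn:TransformationLNormal2}) is doing real work.
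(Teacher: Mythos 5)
Your overall strategy coincides with the paper's: the translation identity $\Phi_{p_0}(b)=\Phi(a_0)^{-1}\cdot\Phi(a_0\cdot b)$ on $U_{p_0}=a_0^{-1}\cdot U$ (coming from \eqref{eqn:ProjectionP0} and \eqref{eqn:TransformationLNormal2}), the exact cancellation $L(b)^{-1}\cdot\Phi_{p_0}(b)=\ell(b)^{-1}\cdot\phi_{p_0}(b)$ which is precisely \eqref{eqn:LongComputation}, and the use of \cref{prop:PansuDifferentiabilityIntrinsicLinearLNormal} to pass between intrinsic linearity of $\ell$ and the graph map being a homogeneous homomorphism. (Minor slip: $(p_0)_{\mathbb W}^{-1}\cdot b=a_0\cdot b$, not $a_0^{-1}\cdot b$; the corrected version is what actually produces your stated conclusions $U_{p_0}=a_0^{-1}\cdot U$ and $\Phi_{p_0}(b)=\Phi(a_0)^{-1}\cdot\Phi(a_0\cdot b)$.) The forward implication is complete as you wrote it.

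The genuine gap is exactly the step you flag and then leave unresolved: in the converse you must upgrade the limit
$\|\big((d\Phi_{a_0}[b])_{\mathbb W}\big)^{-1}\cdot b\|/\|b\|\to 0$, which is known only for $b\to e$ \emph{along the Borel set} $U_{p_0}$ having $e$ merely as a density point, to the global identity $\pi_{\mathbb W}\circ d\Phi_{a_0}=(\mathrm{id})_{|_{\mathbb W}}$ on all of $\mathbb W$. Appealing to ``uniqueness of the Pansu differential'' does not settle this: uniqueness statements in the literature (e.g.\ the one quoted in \cref{rem:TangentSubgroups} from \cite{FMS14}) are for \textbf{open} domains, whereas here the whole point of the proposition is that $U$ is only Borel; uniqueness at a density point of a Borel set is true, but proving it is exactly the same problem in disguise. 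What is needed is the paper's \cref{lem:LemmaBuono}: since $e$ is a density point of $U_{p_0}$, there is a dense set $D$ of directions $w$ in the unit sphere of $\mathbb W$ admitting scales $t_j\downarrow 0$ with $\delta_{t_j}w\in U_{p_0}$; by homogeneity of $w\mapsto\big((d\Phi_{a_0}[w])_{\mathbb W}\big)^{-1}\cdot w$ the smallness at scale $t_j$ transfers to scale $1$, forcing $(d\Phi_{a_0}[w])_{\mathbb W}=w$ for $w\in D$, and then density plus continuity conclude. Without this argument (or an explicit citation covering Borel domains) the converse direction of your proof is incomplete; the paper itself singles out this step as the point where its proof departs from \cite[Proposition 3.25(i)]{ArenaSerapioni}. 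The remaining ingredients you list (normality making $\pi_{\mathbb W}$ a homogeneous homomorphism, the bound $\|g_{\mathbb W}\|\leq C\|g\|$ so that projecting the Pansu limit yields \eqref{eqn:ProjectedDifferentiability}, and then $\ell:=\pi_{\mathbb L}\circ d\Phi_{a_0}$ intrinsically linear via \cref{prop:PansuDifferentiabilityIntrinsicLinearLNormal}) are all correct and are the same as in the paper.
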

\begin{proof}
Let us first notice that $p_0:=\phi(a_0)^{-1}\cdot a_0^{-1}=a_0^{-1}\cdot a_0\cdot \phi(a_0)^{-1}\cdot a_0^{-1}$, and thus, from the fact that $\mathbb L$ is normal,
\begin{equation}\label{eqn:ProjectionP0}
(p_0)_{\mathbb W}=a_0^{-1}, \qquad (p_0)_{\mathbb L}=a_0\cdot\phi(a_0)^{-1}\cdot a_0^{-1}.
\end{equation}

Let us assume that $\phi$ is intrinsically differentiable at $a_0$. We are going to prove that the graph map $\Phi$ is Pansu differentiable at $a_0$. From the intrinsic differentiability of $\phi$, we know that there exists the intrinsic differential $d^{\phi}\phi_{a_0}$ as in \cref{defiintrinsicdiff}, which is an intrinsically linear map by definition. We define its graph map
\begin{equation}\label{eqn:DefinitionPansuInProof}
d\Phi_{a_0}[w]:=w\cdot d^{\phi}\phi_{a_0}[w], \qquad \forall w\in \mathbb W.
\end{equation}
From \cref{prop:PansuDifferentiabilityIntrinsicLinearLNormal}, it follows that the map $d\Phi_{a_0}$ is a homogeneous homomorphism. We show that it is the Pansu differential of the graph map $\Phi$. Indeed, let us take $w\in U$ and compute
\begin{equation}\label{eqn:LongComputation}
    \begin{split}
        d\Phi_{a_0}[a_0^{-1}\cdot w]^{-1}\cdot \Phi(a_0)^{-1}\cdot\Phi(w) &= d^{\phi}\phi_{a_0}[a_0^{-1}\cdot w]^{-1}\cdot w^{-1}\cdot a_0\cdot \phi (a_0)^{-1}\cdot a_0^{-1}\cdot w\cdot \phi (w)= \\
        &= d^{\phi}\phi_{a_0}[a_0^{-1}\cdot w]^{-1}\cdot  \phi_{p_0}(a_0^{-1}\cdot w),
    \end{split}
\end{equation}
where in the first equality we used the definition \eqref{eqn:DefinitionPansuInProof} and in the second one we used \eqref{eqn:ProjectionP0} and the explicit expression in the first equality of \eqref{eqn:TransformationLNormal2}. Notice that, from the second equality of \eqref{eqn:TransformationLNormal2} and the first equality of \eqref{eqn:ProjectionP0}, we get that $ U_{p_0}=a_0^{-1}\cdot  U$. Thus \eqref{eqn:LongComputation}, jointly with the intrinsic differentiability of $\phi$, see \eqref{eqn:IdInCoordinates2}, tells us that 
$$
\lim_{w\to a_0,\, w\in U} \frac{\|d\Phi_{a_0}[a_0^{-1}\cdot w]^{-1}\cdot \Phi(a_0)^{-1}\cdot\Phi(w)\|}{\|a_0^{-1}\cdot w\|} = 0,
$$
that is the Pansu differentiability of $\Phi$ at $a_0$  with differential $d\Phi_{a_0}$. 

Vice versa, let us assume that the graph map $\Phi$ is Pansu differentiable at $a_0$. Thus there exists a homogeneous homomorphism $d\Phi_{a_0}:\mathbb W\to\mathbb G$ such that 
\begin{equation}\label{eqn:PansuDiff}
\lim_{w\to a_0,\, w\in U} \frac{\|d\Phi_{a_0}[a_0^{-1}\cdot w]^{-1}\cdot \Phi(a_0)^{-1}\cdot\Phi(w)\|}{\|a_0^{-1}\cdot w\|} = 0.
\end{equation}
By using the fact that $\mathbb L$ is normal, and by simple computations, we get that
$$
\pi_{\mathbb W}\big(d\Phi_{a_0}[a_0^{-1}\cdot w]^{-1}\cdot \Phi(a_0)^{-1}\cdot\Phi(w)\big)=\big((d\Phi_{a_0}[a_0^{-1}\cdot w])_{\mathbb W}\big)^{-1}\cdot a_0^{-1}\cdot w, \qquad \forall w\in  U.
$$
Now notice that since there exists a geometric constant $C>0$ such that $\|g_{\mathbb W}\|\leq C\|g\|$ for every $g\in\mathbb G$ (see \cite[Proposition 2.12]{FranchiSerapioni16}), from the previous equality and \eqref{eqn:PansuDiff} we deduce
\begin{equation}\label{eqn:ProjectedDifferentiability}
\lim_{w\to a_0,\, w\in U} \frac{\left\|\big((d\Phi_{a_0}[a_0^{-1}\cdot w])_{\mathbb W}\big)^{-1}\cdot a_0^{-1}\cdot w\right\|}{\|a_0^{-1}\cdot w\|} = 0.
\end{equation}
Since $d\Phi_{a_0}$ is a homogeneous homomorphism, the limit in \eqref{eqn:ProjectedDifferentiability} allows us to use the forthcoming \cref{lem:LemmaBuono} with $U_e:=a_0^{-1}\cdot U$. This leads to the equality $\pi_{\mathbb W}\circ (d\Phi_{a_0})= (\mathrm{id})_{|_{\mathbb W}}$. 

Thus, by using the splitting, for every $w\in \mathbb W$ it holds $d\Phi_{a_0}[w]=:w\cdot d^{\phi}\phi_{a_0}[w]$ for some map $d^{\phi}\phi_{a_0}:\mathbb W\to \mathbb L$. Now we are in a position to apply \cref{prop:PansuDifferentiabilityIntrinsicLinearLNormal} to deduce that $d^{\phi}\phi_{a_0}$ is intrinsically linear, because its graph is a homogeneous homomorphism being a Pansu differential. Now the intrinsic differentiability of $\phi$ at $a_0$, with intrinsic differential $d^{\phi}\phi_{a_0}$, follows by joining the computations in \eqref{eqn:LongComputation} with \eqref{eqn:PansuDiff}. 
\end{proof}
\begin{lemma}\label{lem:LemmaBuono}
Let $\mathbb W$ and $\mathbb L$ be two complementary subgroups of a Carnot group $\mathbb G$ and let $U_e\subseteq \mathbb W$ be a \textbf{Borel} set containing the identity $e$ and for which $e$ is a \textbf{density point}. Let us assume $F:\mathbb W\to\mathbb G$ is a continuous homogeneous map, i.e., it commutes with $\delta_{\lambda}$ for every $\lambda>0$. Let us further assume that 
$$
\lim_{w\to e,\, w\in U_e}\frac{\left\|(F(w)_{\mathbb W})^{-1}\cdot w\right\|}{\|w\|}=0.
$$
Then $F(w)_{\mathbb W}=w$ for every $w\in \mathbb W$.
\end{lemma}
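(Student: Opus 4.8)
The plan is to reduce everything to the single auxiliary map $g:=\pi_{\mathbb W}\circ F\colon\mathbb W\to\mathbb W$ and to exploit that the quantity controlled in the hypothesis is \emph{dilation invariant}, so that the smallness that is known only along $U_e$ near $e$ propagates to every direction precisely because $e$ is a density point of $U_e$.

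First I would record the elementary structural facts. Since $\mathbb W$ and $\mathbb L$ are complementary subgroups, the projection $\pi_{\mathbb W}$ is continuous and homogeneous, i.e. $\pi_{\mathbb W}(\delta_\lambda g)=\delta_\lambda\pi_{\mathbb W}(g)$ for every $\lambda>0$ and $g\in\mathbb G$ (see \cite{FranchiSerapioni16}). Hence $g:=\pi_{\mathbb W}\circ F$ is continuous and homogeneous as well. Moreover, since $F$ is continuous and homogeneous, $F(e)=\lim_{\lambda\to 0}F(\delta_\lambda e)=\lim_{\lambda\to 0}\delta_\lambda F(e)=e$, so $g(e)=e$; thus it suffices to prove $g(w)=w$ for every $w\in\mathbb W\setminus\{e\}$. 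On $\mathbb W\setminus\{e\}$ set
$$
h(w):=\frac{\big\|\,g(w)^{-1}\cdot w\,\big\|}{\|w\|}.
$$
Because $g$ is continuous and $\|w\|\neq 0$ for $w\neq e$, the function $h$ is continuous on $\mathbb W\setminus\{e\}$; and since each $\delta_\lambda$ is an automorphism and $g(\delta_\lambda w)=\delta_\lambda g(w)$, we get $g(\delta_\lambda w)^{-1}\cdot\delta_\lambda w=\delta_\lambda\big(g(w)^{-1}\cdot w\big)$, whence $h(\delta_\lambda w)=h(w)$. So $h$ is a continuous, nonnegative, dilation-invariant function, and the hypothesis says exactly that $h(w)\to 0$ as $w\to e$ with $w\in U_e$.

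Now I argue by contradiction. Suppose $h(w_0)=c>0$ for some $w_0\neq e$, and set $A:=\{w\in\mathbb W\setminus\{e\}:h(w)>c/2\}$. Then $A$ is open (continuity of $h$), nonempty (it contains $w_0$), and invariant under all $\delta_\lambda$ (degree-zero homogeneity of $h$). Since the distance is homogeneous, $B(e,r)=\delta_r\big(B(e,1)\big)$, hence $A\cap B(e,r)=\delta_r\big(A\cap B(e,1)\big)$, and therefore, using $\mathcal H^k\llcorner\mathbb W(\delta_r E)=r^k\,\mathcal H^k\llcorner\mathbb W(E)$ with $k=\dim_H(\mathbb W)$,
$$
\mathcal H^k\llcorner\mathbb W\big(A\cap B(e,r)\big)=\alpha\, r^k,\qquad \alpha:=\mathcal H^k\llcorner\mathbb W\big(A\cap B(e,1)\big)>0,
$$
the positivity of $\alpha$ coming from the fact that $A\cap B(e,1)$ is a nonempty open subset of $\mathbb W$ (nonempty since $\delta_\lambda w_0\in A\cap B(e,1)$ for $\lambda$ small). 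On the other hand, $e\in\mathcal D(U_e)$ together with $\mathcal H^k\llcorner\mathbb W(B(e,r))=r^k\mathcal H^k\llcorner\mathbb W(B(e,1))$ gives $\mathcal H^k\llcorner\mathbb W\big(B(e,r)\setminus U_e\big)=o(r^k)$ as $r\to 0$. Comparing the two estimates, for every sufficiently small $r>0$ the set $A\cap B(e,r)\cap U_e$ has positive $\mathcal H^k\llcorner\mathbb W$-measure, in particular it is nonempty; choosing $w_r$ in it along a sequence $r\to 0$ produces points $w_r\in U_e$ with $w_r\to e$ and $h(w_r)>c/2$, contradicting the hypothesis. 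Hence $h\equiv 0$ on $\mathbb W\setminus\{e\}$, i.e. $g(w)=w$ there, and with $g(e)=e$ we conclude $F(w)_{\mathbb W}=g(w)=w$ for every $w\in\mathbb W$.

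The only genuinely delicate point is the density step: one must upgrade the hypothesis, which controls $h$ only along $U_e$, into information about $h$ on a whole open set of directions. This is where the dilation invariance of $h$ is essential — it forces the ``bad'' set $A$ to be a full cone, hence to occupy a fixed proportion $\alpha>0$ of every ball $B(e,r)$, a proportion independent of $r$ that must beat the $o(r^k)$ decay of $B(e,r)\setminus U_e$. Everything else — continuity and homogeneity of $g$, the scaling behaviour of balls and of $\mathcal H^k\llcorner\mathbb W$ — is routine.
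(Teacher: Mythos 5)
Your proof is correct. Both your argument and the paper's rest on the same two pillars: the ratio $w\mapsto\|(F(w)_{\mathbb W})^{-1}\cdot w\|/\|w\|$ is invariant under dilations, and the density-point hypothesis lets you transfer the smallness known only along $U_e$ near $e$ to every direction. The packaging, however, differs. The paper argues directly: from $e\in\mathcal D(U_e)$ it extracts (via a short measure-theoretic footnote) a dense subset $D$ of the unit sphere $S^{\mathbb W}$ such that every $w\in D$ admits an infinitesimal sequence of scales $t_j$ with $\delta_{t_j}w\in U_e$; homogeneity of the ratio then gives $\|(F(w)_{\mathbb W})^{-1}\cdot w\|\leq\epsilon$ for all $\epsilon>0$, hence $F(w)_{\mathbb W}=w$ on $D$, and continuity plus homogeneity of $F$ extend this to all of $\mathbb W$. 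You instead argue by contradiction: the ``bad'' set $A=\{h>c/2\}$ is an open dilation-invariant cone, so it fills a fixed proportion $\alpha r^k$ of every ball $B(e,r)$, while $\mathcal H^k\llcorner\mathbb W\big(B(e,r)\setminus U_e\big)=o(r^k)$, forcing $A\cap U_e$ to contain points arbitrarily close to $e$. Your version buys a cleaner quantitative use of the density point (no need for the dense-set-of-directions claim, which the paper only sketches in a footnote), at the cost of a contradiction setup and of using continuity earlier (to make $A$ open, hence of positive measure); the paper's version is constructive on a dense set and defers continuity to the final extension step. Both are complete and correct.
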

\begin{proof}
Notice first that the map $w\to (F(w)_{\mathbb W})^{-1}$ is a homogeneous map.  
Indeed, from the homogeneity of $F$, the homogeneity of the projection onto $\mathbb W$ and the homogeneity of the inverse, respectively, we get
\begin{equation}\label{eqn:Hom}
\left((F(\delta_{\lambda} w))_{\mathbb W}\right)^{-1}=\left((\delta_{\lambda}F(w)\right)_{\mathbb W})^{-1}=(\delta_{\lambda}(F(w)_{\mathbb W}))^{-1}=\delta_{\lambda}((F(w)_{\mathbb W}))^{-1}, \qquad \forall \lambda>0.
\end{equation}
Set $S^{\mathbb W}:=\{w\in \mathbb W:\|w\|=1\}$. Since $e$ is a density point of $U_e\subseteq \mathbb W$, the following holds: there exists $D\subseteq S^{\mathbb W}$, dense in $S^{\mathbb W}$, such that for every $w\in D$ there exists an infinitesimal sequence $\{t_j\}_{j\in\mathbb N}$ such that $\delta_{t_j}w\in U_e$ for all $j\in\mathbb N$.\footnote{Indeed if $U\subseteq \mathbb W$ is Borel and for $\mathcal{H}^k\llcorner \mathbb W$-almost every point $x$ in $U$ there exist a sequence $h_j(x)$ converging to $e$ and $0<\lambda(x)<1$ with $B(x\cdot h_j(x),\lambda(x)\|h_j(x)\|)\cap E=\emptyset$ for all $j\in\mathbb N$, then $\mathcal{H}^k\llcorner\mathbb W(E)=0$.}

Fix $w\in D$. We claim that $F(w)_{\mathbb W}=w$ so that by density of $D$ and by the continuity and the homogeneity of $F$ the thesis follows. Indeed, let us fix $\epsilon>0$. By hypothesis, and since there exists an infinitesimal sequence $\{t_j\}_{j\in\mathbb N}$ such that $\delta_{t_j}w\in U_e$, we get that there is $t_{j_0}>0$ such that 
$$
\|(F(\delta_{t_{j_0}}w)_{\mathbb W})^{-1}\cdot \delta_{t_{j_0}}w\| \leq \epsilon \|\delta_{t_{j_0}}w\|. 
$$
By the homogeneity in \eqref{eqn:Hom}, the homogeneity of the norm and the fact that $\|w\|=1$, since $w\in D\subseteq S^{\mathbb W}$, we get
$$
\|(F(w)_{\mathbb W})^{-1}\cdot w\| \leq \epsilon. 
$$
Thus from the fact that $\epsilon>0$ is arbitrary we get the sought conclusion.
\end{proof}

We are now in a position to prove our \cref{thm:AbelianRademacherIntro}.
Let us recall the definition of the Jacobian of a homogeneous map. Take a \textbf{homogeneous map} $F:\mathbb W\to \mathbb G$ between \textbf{graded groups} $\mathbb W$ and $\mathbb G$, equipped with homogeneous left-invariant distances $d_{\mathbb W}$ and $d_{\mathbb G}$, respectively. Denote by $k:=\mathrm{dim}_{H}(\mathbb W)$ the Hausdorff dimension of $\mathbb W$. The {\em Jacobian} of $F$ is
\begin{equation}\label{def:definitionOfJ}
J(F):=\frac{\mathcal{H}^k(F(B(e,1)))}{\mathcal{H}^k(B(e,1))},
\end{equation}
where $B(e,1)$ is the ball centered at the identity $e$ of $\mathbb W$, and of radius 1. 

\begin{proof}[Proof of \cref{thm:AbelianRademacherIntro}]
Since $\mathbb L$ is \textbf{normal}, we can use \cite[Proposition 3.7]{FranchiSerapioni16} and thus, from the fact that $\phi$ is intrinsically Lipschitz, we deduce that the graph map $\Phi: U\subseteq \mathbb W\to \mathbb G$ is Lipschitz. Since $\mathbb W$ is a \textbf{Carnot subgroup}, due to \cref{rem:KEY}, we are in a position to apply Rademacher theorem (see \cite{Pansu} and \cite[Theorem 3.9]{Magnani01}) to the graph map $\Phi: U\subseteq \mathbb W\to\mathbb G$, in order to conclude that it is $\mathcal H^k\llcorner \mathbb W$-almost everywhere Pansu differentiable on $U$. Eventually, we apply \cref{thm:ToolForRademacher} to conclude that every point of Pansu differentiability of $\Phi$ is a point of intrinsic differentiability of $\phi$. Finally, the area formula \eqref{eqn:AreaElementFormula} is a direct consequence of the area formula in \cite[Theorem 4.3.4]{MagnaniPhD} applied to the graph map $\Phi$, after having noticed that $\Phi:U\subseteq \mathbb W\to \mathbb G$ is injective. 
\end{proof}

\begin{osservazione}
By joining the result of \cref{thm:AbelianRademacherIntro} and \cref{rem:TangentSubgroups} we conclude that in the hypotheses of \cref{thm:AbelianRademacherIntro} the intrinsically Lipschitz property guarantees the existence, at $(\Phi)_*(\mathcal{H}^k\llcorner U)$-almost every point on the graph of $\phi$, of a tangent subgroup.
Moreover, from \cref{thm:ToolForRademacher} we get that, whenever the Pansu differential $d\Phi_x$ exists, then $\phi$ is intrinsically differentiable at $x$ and $d\Phi_x[w]=w\cdot d^{\phi}\phi_{x}[w]$ for all $w\in\mathbb W$.
Taking into account this equality, the definition of the Jacobian \eqref{def:definitionOfJ} and \cref{rem:TangentSubgroups} we stress that the area element $J(d\Phi_x)$ in the area formula \eqref{eqn:AreaElementFormula} only depends on the geometry of the tangent subgroup of $\mathrm{graph}(\phi)$ at $\Phi(x)$, which is $\mathrm{graph}(d^{\phi}\phi_x)$.
\end{osservazione}
\begin{osservazione}
The hypotheses of \cref{thm:AbelianRademacherIntro} are satisfied whenever we take an intrinsically Lipschitz function $\phi:U\subseteq \mathbb W\to\mathbb L$, with $\mathbb W$ \textbf{horizontal}. Thus our result applies in particular to intrinsically Lipschitz horizontal surfaces in arbitrary Carnot groups.
\end{osservazione}
\begin{osservazione}\label{rem:CounterexampleNormal}
If we do not assume $\mathbb L$ to be \textbf{normal} in the hypotheses of \cref{thm:AbelianRademacherIntro}, but still we assume that $\mathbb W$ is a \textbf{Carnot subgroup}, the graph map $\Phi:U\subseteq\mathbb W\to\mathbb L$ may not be Lipschitz when $\phi$ is intrinsically Lipschitz. The forthcoming example is also found in \cite{FranchiSerapioni16}.

Indeed, let us take the second Heisenberg group $\mathbb H^2$, with a basis of the Lie algebra given by $(X_1,X_2,X_3,X_4,X_5)$, where the only nontrivial relations are $[X_1,X_3]=[X_2,X_4]=X_5$. Let us identify $\mathbb H^2$ with $\mathbb R^5$ by means of exponential coordinates of the first kind. Set, in those exponential coordinates, $\mathbb W:=\{(0,x_2,x_3,x_4,x_5):x_2,x_3,x_4,x_5\in \mathbb R\}$ and $\mathbb L:=\{(x_1,0,0,0,0):x_1\in \mathbb R\}$. We notice that $\mathbb W$ is a \textbf{Carnot subgroup} and $\mathbb L$ is \textbf{not normal}. 

It is easily verified that the map $\phi:\mathbb W\to \mathbb L$ defined as $\phi(0,x_2,x_3,x_4,x_5):=(1,0,0,0,0)$ for every $(x_2,x_3,x_4,x_5)\in \mathbb R^4$ is intrinsically Lipschitz. Moreover, if we fix $\epsilon>0$ we have that $\Phi(0,0,\epsilon,0,0)=(1,0,\epsilon,0,-\epsilon/2)$ and $\Phi(0,0,0,0,0)=(1,0,0,0,0)$. Thus $$\|\Phi(0,0,0,0,0)^{-1}\cdot\Phi(0,0,\epsilon,0,0)\|=\|(0,0,\epsilon,0,-\epsilon)\|\sim_{\epsilon\to 0} \epsilon^{1/2},$$ and then $\Phi$ cannot be Lipschitz, since $\|(0,0,\epsilon,0,0)\|\sim_{\epsilon\to 0} \epsilon$. 
\end{osservazione}

\printbibliography

\end{document}